\newcommand{\floor}[1]{\lfloor #1 \rfloor}
\def\beq*{\begin{eqnarray*}}
\def\eeq*{\end{eqnarray*}}
\def\bqn{\begin{eqnarray}}
\def\eqn{\end{eqnarray}}
\def\be{\begin{equation}}
\def\ee{\end{equation}}
\def\lbl{\label}
\def\b{\beta}
\def\a{\alpha}
\def\d{\delta}
\def\deg{\operatorname{deg}}
\def\var{{{\mathbb V}\mbox{ar}}}
\def\mcT{\mathcal{T}}
\def\mbR{\mathbb{R}}
\def\mbP{\mathbb{P}}
\def\mbE{\mathbb{E}}
\def\mcX{\mathcal{X}}
\def\mcP{\mathcal{P}}
\newcommand{\ris}[1]{\overline{#1}}	
\newtheorem{theorem}{Theorem}[section]
\newtheorem{proposition}[theorem]{Proposition}
\newtheorem{definition}[theorem]{Definition}
\numberwithin{subcase}{case}
\author[Pawe{\l} Hitczenko]{Pawe{\l} Hitczenko}
\address{Department of Mathematics, Drexel University, Philadelphia, 
PA  19104, USA} 
\email{phitczenko@math.drexel.edu}
\author[Aleksandr Yaroslavskiy]{Aleksandr Yaroslavskiy
}
\address{Department of Mathematics, Drexel University, Philadelphia, 
PA  19104, USA} 
\email{amy46@drexel.edu}
\keywords{Tree--like tableaux, permutation tableaux,  partially asymmetric simple exclusion process}
\subjclass[2010]{05A05, 05A15, 60C05, 60F05}
 \title[Distribution of the Number of Corners in Tree-Like Tableaux]{Distribution of the Number of Corners in Tree--like Tableaux}
\begin{document}

\begin{abstract}
 
   In this paper, we study tree--like tableaux and some of their probabilistic properties. Tree--like tableaux are in bijection with other combinatorial structures, including permutation tableaux,  and have a connection to the partially asymmetric simple exclusion process (PASEP),  an important  model of an interacting particles system. In particular, in the context of tree-like tableaux, a corner corresponds to a node occupied by a particle that could jump to the right while inner corners indicate a particle with an empty node to its left. Thus, the total number of corners represents the number of nodes at which PASEP can move, i. e. the total  current activity of the system. As the number of inner corners and regular corners is connected, we limit our discussion to just regular corners and show that asymptotically, the number of corners in a tableau of length $n$ is normally distributed.
\end{abstract}


\maketitle

\section{Introduction}
In this report, we study tree-like tableaux, a combinatorial object introduced in \cite{abn}. They are in bijection with permutation tableaux and alternative tableaux but are interesting in their own right as they exhibit a natural tree structure. Aside from being in bijection with permutations and permutation tableaux, they can be used to study the partially asymmetric simple exclusion process (PASEP). The PASEP (see e. g. \cite{cw,dehp} and references therein) is a model in which $n$ nodes on a 1-dimensional lattice each either contain a particle or not. At each time interval, a particle can either move left or right to an empty adjacent node with fixed probabilities  and the probability of a move left is $q$ times the probability of jumping to the right. New particle may also enter from the left with probability $\a$ (if the first node is unoccupied) and a particle on the $n$th node may leave the lattice with probability $\b$.  A state of the PASEP is a configuration of occupied and unoccupied nodes and it naturally corresponds to border edges of tree--like tableaux.
In this association,  corners in tree--like tableaux correspond to sites at which a particle can move  (we will give more details below, see also \cite{lz} for an explanation). In physics literature this is known as (total) current activity \cite{ds04,ds05} and  was studied for the TASEP (a special case of the PASEP with $q=0$) in \cite{sq}.

 It was conjectured (see  \cite[Conjecture~4.1]{lz}) that the expected number of corners in a randomly chosen tree--like tableaux of size $n$ is $(n+4)/6$.
 This conjecture (and its companion for symmetric tree--like tableaux) was  proved in \cite[Theorem~4]{hl} and subsequently also in \cite[Theorem~4.1]{ggls}). However, not much beyond that has been known (even the asymptotic value of the variance).  In the present paper we take the next step in the analysis of tree--like tableaux. First, we obtain the variance of the number of corners. Furthermore,  we   also show that the number of corners in random tree--like tableau of size $n$ is asymptotically normal as $n$ goes to infinity.

The rest of the paper is organized as follows. In the next section we introduce  the necessary definitions and  notation.
  We also explain the relation between the tree--like tableaux and the PASEP and state our main result on the distribution of the number of corners. 
 In Section~\ref{sec:gf}   we present a recursive relation for the generating function involving the corners in a similar combinatorial object, namely permutation tableaux. This recursion will be used in Section~\ref{sec:mgf} to obtain a recursion for the moment generation function of the number of corners in permutation tableaux and in Section~\ref{sec:clt} to conclude the proof of asymptotic normality. Since the number of corners in two types of the tableaux are closely related (and the difference is asymptotically negligible after  normalization) this will  immediately imply the same result for the number of corners in tree--like tableaux.

\section{Preliminaries and statement of main result}
\subsection{Tree--like Tableaux and Permutation Tableaux}
We endeavor to introduce the background for studying tree--like tableaux. We start by recalling the necessary notions and properties.  
\begin{definition}
A Ferrers diagram is an up and left justified arrangement of cells with weakly decreasing number of cells in rows. Depending on the situation, some rows may or may not be empty. The length of a Ferrers diagram is the number of columns plus the number of rows. 
 \end{definition}

Let us recall the following  definition introduced in \cite{abn}.

 \begin{definition}\label{T}
A tree--like tableau of size $n$ is a Ferrers diagram of length $n+1$ with no empty rows and with some cells (called pointed cells) filled with a point according to the following rules:
\begin{enumerate}
\item The cell in the first column and first row is always pointed (this point is known as the root point). \label{T1}
\item Every row and every column contains at least one pointed cell. \label{T2}
\item For every non--root pointed cell, either all the cells above are empty or all the cells to the left are empty (but not both). \label{T3}
\end{enumerate}
We denote the set of all tree--like tableaux of size $n$ by $\mcT_n$.
\end{definition}

\begin{figure}[h]
\setlength{\unitlength}{.5cm}
\begin{center}
\begin{picture} (8,8)

\put(0,0){\line(0,1){7}}
\put(1,0){\line(0,1){7}}
\put(2,1){\line(0,1){6}}
\put(3,3){\line(0,1){4}}
\put(4,3){\line(0,1){4}}
\put(5,3){\line(0,1){4}}
\put(6,5){\line(0,1){2}}
\put(7,5){\line(0,1){2}}

\put(0,7){\line(1,0){7}}
\put(0,6){\line(1,0){7}}
\put(0,5){\line(1,0){7}}
\put(0,4){\line(1,0){5}}
\put(0,3){\line(1,0){5}}
\put(0,2){\line(1,0){2}}
\put(0,1){\line(1,0){2}}
\put(0,0){\line(1,0){1}}

\put(4.3, 3.3){$\bullet$}
\put(5.3, 5.3){$\bullet$}
\put(6.3, 6.3){$\bullet$}
\put(3.3, 6.3){$\bullet$}
\put(2.3, 3.3){$\bullet$}
\put(1.3, 2.3){$\bullet$}
\put(1.3, 4.3){$\bullet$}
\put(1.3, 5.3){$\bullet$}
\put(1.3, 6.3){$\bullet$}
\put(0.3, 3.3){$\bullet$}
\put(0.3, 1.3){$\bullet$}
\put(0.3, 0.3){$\bullet$}
\put(0.3, 6.3){$\bullet$}

\end{picture}

\end{center}

\caption{A tree--like tableau of size $13$.} \lbl{fig:tlt} 

\end{figure}
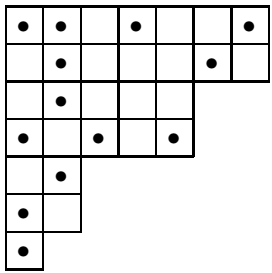

We will also need a notion of permutation tableaux originally introduced in \cite{sw}. 
\begin{definition}\label{P}
A permutation tableau of size $n$ is a Ferrers diagram of length $n$ whose non--empty rows are filled with $0$'s and $1$'s according to the following rules: 
\begin{enumerate}
\item Each column has at least one 1. \label{P1}
\item Any 0 cannot have a 1 both above it and to the left of it simultaneously. \label{P2} 
\end{enumerate}
We denote the set of all permutation tableaux of size $n$ by $\mcP_n$.
\end{definition}

\begin{figure}[h]
\setlength{\unitlength}{.5cm}
\begin{center}
\begin{picture} (24,7)
%
\put(0,3){\line(0,1){4}}
\put(1,3){\line(0,1){4}}
\put(2,4){\line(0,1){3}}
\put(3,5){\line(0,1){2}}
\put(4,6){\line(0,1){1}}
\put(5,6){\line(0,1){1}}
\put(6,6){\line(0,1){1}}
\put(7,6){\line(0,1){1}}

\put(0,7){\line(1,0){7}}
\put(0,6){\line(1,0){7}}
\put(0,5){\line(1,0){3}}
\put(0,4){\line(1,0){2}}
\put(0,3){\line(1,0){1}}

\put(4.3, 6.3){$1$}
\put(5.3, 6.3){$1$}
\put(6.3, 6.3){$1$}
\put(3.3, 6.3){$1$}
\put(2.3, 6.3){$0$}
\put(1.3, 6.3){$1$}
\put(2.3, 5.3){$1$}
\put(1.3, 5.3){$0$}
\put(1.3, 4.3){$1$}
\put(0.3, 6.3){$0$}
\put(0.3, 4.3){$1$}
\put(0.3, 3.3){$0$}
\put(0.3, 5.3){$0$}


\put(10,1){\line(0,1){6}}
\put(11,3){\line(0,1){4}}
\put(12,4){\line(0,1){3}}
\put(13,4){\line(0,1){3}}
\put(14,4){\line(0,1){3}}
\put(15,6){\line(0,1){1}}
\put(16,6){\line(0,1){1}}

\put(10,7){\line(1,0){6}}
\put(10,6){\line(1,0){6}}
\put(10,5){\line(1,0){4}}
\put(10,4){\line(1,0){4}}
\put(10,3){\line(1,0){1}}


\put(10.25, 3.25){$1$}
\put(10.25, 4.25){$0$}
\put(10.25, 5.25){$0$}
\put(10.25, 6.25){$0$}

\put(11.25, 6.25){$1$}
\put(11.25, 5.25){$0$}
\put(11.25, 4.25){$1$}

\put(12.25, 6.25){$0$}
\put(12.25, 5.25){$1$}
\put(12.25, 4.25){$1$}

\put(13.25, 6.25){$0$}
\put(13.25, 5.25){$1$}
\put(13.25, 4.25){$1$}

\put(14.25, 6.25){$1$}

\put(15.25, 6.25){$1$}


\put(20,3){\line(0,1){4}}
\put(21,3){\line(0,1){4}}
\put(22,3){\line(0,1){4}}
\put(23,4){\line(0,1){3}}

\put(20,7){\line(1,0){3}}
\put(20,6){\line(1,0){3}}
\put(20,5){\line(1,0){3}}
\put(20,4){\line(1,0){3}}
\put(20,3){\line(1,0){2}}

\put(20.25, 6.25){$1$}

\put(20.25, 5.25){$0$}
\put(21.25, 5.25){$0$}

\put(21.25, 4.25){$1$}
\put(20.25, 4.25){$0$}
\put(22.25, 4.25){$1$}

\put(20.25, 3.25){$0$}
\put(21.25, 3.25){$1$}

\put(21.25, 6.25){$0$}
\put(22.25, 6.25){$1$}
\put(22.25, 5.25){$0$}

\end{picture}
\end{center}
\caption{Examples of permutation tableaux. The tableau in the middle has  two empty rows.} \lbl{fig:pt}
\end{figure}
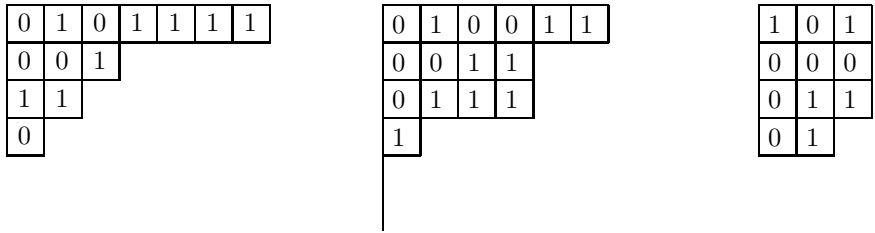
In a tree--like or  a permutation tableau,  the edges  outlining the southeast border are often called border edges. We also refer to those edges as steps. Each step is either a south step or a west step if we move along  border edges from northeast to southwest or a north step or an east step if we move in the opposite direction.
\begin{definition}
A corner in a tableau is a south step followed immediately by a west step as we traverse the border edges starting from the  northeast and going to the southwest 
 end. We denote by $c(T)$ the number of corners of the tableau $T$. If $\mcT$ is a set of tableaux we let 
\[c(\mcT)=\sum_{T\in\mcT}c(T)\]
 denote the total number of corners of tableaux in $\mcT$. 
\end{definition}
Tree--like tableaux correspond to  the states of the PASEP as follows:  traverse the border edges of a tree--like tableau beginning at the southwest end. Ignoring the first and the last step, a north step corresponds to an unoccupied node and an east step corresponds to an occupied node. Thus, for example, the tree--like tableau depicted in Figure~\ref{fig:tlt} corresponds to the following state of the PASEP on $12$ nodes:  

\begin{figure}[htbp]
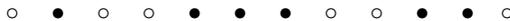
 
\begin{center}
\[
\circ\quad\bullet\quad\circ\quad\circ\quad\bullet\quad\bullet\quad\bullet\quad\circ\quad\circ\quad\bullet\quad\bullet\quad\circ\]
\end{center}
\caption{The  state of the PASEP corresponding to the tree--like tableau in Figure~\ref{fig:tlt}.} \lbl{fig:pasep}
\end{figure}
(In this state of the PASEP a particle could enter from the left, the particle in the second node could jump in either direction, the particle in the fifth or the tenth node could jump to the left and a particle in the seventh or the eleventh node could jump to the right.)  

With this association, the corners in tree--like tableau correspond to occupied sites, in which the particle could jump to the right (or enter from the left, or leave to the right) and any inner corner (north step followed by the east step) corresponds to an occupied node  with a particle that can jump to the left. Since the number of inner corners is one less than the number of corners, the total number of possible moves for the PASEP in a state corresponding to $T\in\mcT_n$ is  $2c(T)-1$. For example the tableau in Figure~\ref{fig:tlt} has four corners and thus the PASEP in the state depicted in Figure~\ref{fig:pasep}
has seven possible moves as described above. As we mentioned earlier, in physics literature the number of nodes at which a particle can move is called the current activity of the system, see e.~g.  \cite{ds04,ds05,sq}.

It is known (see \cite[Proposition~3.1]{abn}) that  tree--like tableaux of length $n+1$ are in bijection with  permutation tableaux of length $n$ (and both are in bijection with permutations of $[n]$, see e.~g.~\cite{bur,cn,sw,abn}). The corners  need not be preserved, but a difference between their number in a tableau and its image under that bijection is at most one (see \cite[Section~3]{hl}).  
Therefore,  in order to study corners in tree--like tableaux it will be enough to study corners in permutation tableaux, and this is what we are going to do. We need a few more notions associated with permutation tableaux.

\begin{definition}
We say a zero in a permutation tableau is restricted if it has a one above it. Otherwise, the zero is unrestricted. We say a row is restricted if it contains a restricted zero, otherwise it is unrestricted. We denote by $u(T)$ the number of unrestricted rows of $T$.
\end{definition}
In the first example in Figure~\ref{fig:pt}, the top  and the third row are unrestricted, but the other two rows are restricted.  Note that the top row of a permutation tableau is necessarily  unrestricted. 

An important feature of permutation tableaux is that they can be constructed recursively. Given a permutation tableau, we can increase its length incrementally and fill in the new columns as they come.
\begin{definition}
We say a tableau $T\rq{}\in \mathcal{P}_{n+1}$ is an extension of a tableau $T\in \mathcal{P}_{n}$ if $T\rq{}$ is obtained either by adding a south step to $T$ or by adding a west step to $T$ and filling the new  column according to the rules. 
\end{definition}
Notice that there is only one way to extend a tableau by adding a south step, but multiple ways by adding a west step. When a west step is added, a new column is formed which must be filled. In a cell that is part of a restricted row, it must have a zero. The cells that are part of the unrestricted rows leave us options. It is not difficult to count the number of extensions    (see e.~g.~\cite{ch,hj}) and we have: 
\begin{proposition}
The number of extensions of $T\in \mathcal{P}_{n}$ into $T\rq{}\in \mathcal{P}_{n+1}$ is $2^{u(T)}$. 
\end{proposition}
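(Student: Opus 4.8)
The claim is that a permutation tableau $T \in \mathcal{P}_n$ has exactly $2^{u(T)}$ extensions in $\mathcal{P}_{n+1}$. The plan is to analyze the two ways of extending separately and then count the west-step extensions by a direct combinatorial argument using the defining rules of permutation tableaux. First I would recall that extending by a south step is always possible and unique: it adds an empty row, which creates no new column and therefore imposes no filling constraints and violates no rule (rule \eqref{P1} concerns only columns, and an empty row cannot create a $0$ with a $1$ above and to the left). So the south-step extension contributes exactly $1$.

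Next I would count the west-step extensions. Adding a west step creates one new column, appended on the right, whose height equals the number of rows of $T$ (rows are not added by a west step, and every non-empty row of $T$ meets this new column). Each cell of this new column must be filled with a $0$ or a $1$. The key observation is to determine which fillings are legal. In a cell lying in a \emph{restricted} row of $T$ — a row containing a restricted zero, i.e.\ a $0$ with a $1$ somewhere above it in $T$ — placing a $1$ in the new column would create that old restricted zero as a cell with a $1$ above it (in its own column) \emph{and} a $1$ to its left (the new column entry), violating rule \eqref{P2}; hence such a cell is forced to be $0$. Conversely, in a cell lying in an \emph{unrestricted} row, I claim both choices are available: putting a $1$ there is fine, and putting a $0$ there is also fine provided we check rule \eqref{P2} for that new $0$ and for any $0$'s to its right — but the new column is the rightmost, so a new $0$ has nothing to its left, and it can only threaten to be "above and left" of $0$'s in later columns, which do not yet exist. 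One still has to verify rule \eqref{P1} (each column has a $1$): the new column has a $1$ precisely when at least one unrestricted row gets a $1$; I would handle the all-zero choice by noting the topmost row of $T$ is always unrestricted, and check carefully whether the all-zero filling is permitted — this is exactly the subtle point.

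The main obstacle, then, is the careful verification that the fillings of the new column are \emph{independent} across the unrestricted rows and that every combination of $0$'s and $1$'s in those $u(T)$ cells yields a valid permutation tableau, including confirming that rule \eqref{P1} does not secretly forbid the all-zeros choice. Once the legal fillings are shown to be in bijection with arbitrary $\{0,1\}$-assignments to the $u(T)$ cells in unrestricted rows (with restricted-row cells forced to $0$), the count of west-step extensions is $2^{u(T)}$. I would finish by observing that the south-step extension is distinct from all west-step extensions (it changes the shape differently), but that it should \emph{not} be added separately: rather, I expect the intended statement folds it in, so I would double-check the normalization — either the south step is the "all constraints, no new column" degenerate case already counted, or the total is $2^{u(T)} = 1 + (2^{u(T)} - 1)$ with the $1$ being the south step and $2^{u(T)}-1$ being the west steps with a nonempty column; reconciling this bookkeeping against the cited references \cite{ch,hj} is the last step.
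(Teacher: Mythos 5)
The paper states this proposition without proof, citing \cite{ch,hj}, so there is no internal argument to compare against; your proposal has to stand on its own. Its skeleton is the standard one and the final bookkeeping is right: one south-step extension (an empty row, which is legal since empty rows are permitted and impose no constraints) plus $2^{u(T)}-1$ west-step extensions (all $\{0,1\}$-fillings of the cells lying in unrestricted rows except the all-zeros filling, which rule (1) of Definition~\ref{P} forbids), for a total of $2^{u(T)}$.

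The genuine problem is that you have the geometry of the west-step extension backwards, and your two key justifications are non sequiturs as written. A west step appended at the southwest end of the border path creates a new \emph{leftmost} column, which meets every row of $T$ (including previously empty rows, which are unrestricted and hence correctly counted in $u(T)$); it is not ``appended on the right.'' Your argument that a restricted row forces a $0$ invokes ``a $1$ to its left (the new column entry),'' which is only true for a left-appended column and contradicts your stated orientation; and your argument that a new $0$ is harmless reads ``the new column is the rightmost, so a new $0$ has nothing to its left,'' which would be false for a rightmost column and is true precisely because the column is leftmost. With the orientation corrected, everything you assert does hold: a $1$ placed in the new cell of a restricted row sits to the left of a $0$ that already has a $1$ above it, violating rule (2); a $1$ in an unrestricted row is harmless because no $0$ in that row has a $1$ above it; a $0$ in the new column violates nothing, since it has no cell to its left and is not itself a $1$ above or left of anything; and rule (1) excludes exactly the all-zeros assignment. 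Finally, you should commit to, rather than defer, the resolution you correctly guess at the end: the count is $1+(2^{u(T)}-1)$, the $1$ being the unique south-step extension, not $2^{u(T)}$ column fillings alone.
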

This, however, tells us nothing of the number of unrestricted rows of the extended tableau, which is often of relevance. But the evolution of the number of unrestricted rows can be traced down (see \cite{ch} or \cite{hj}) and is given by:

\begin{proposition}\lbl{prop:ext}
Let $T\in \mathcal{P}_{n}$ be a permutation tableau of length $n$, and let $u(T)$ be the number of unrestricted rows of $T$. The number of ways to extend $T$ so that the extension has exactly $k$
 unrestricted rows, $1\le k\le u(T)$,  is:
\[\sum_{j=1}^{k} \binom {u(T)-j}{k-j}=\binom{u(T)}{k-1}.\]
\end{proposition}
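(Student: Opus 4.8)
The plan is to count extensions according to which cells of the new column (in the unrestricted rows) receive a $1$, and to identify exactly when a given unrestricted row of $T$ survives as an unrestricted row of the extension. First I would recall the mechanics: when we add a west step, a new column is created; cells in restricted rows of $T$ must receive a $0$ (a $1$ there would be forbidden, since a restricted row already has a $1$ above some $0$), and each cell in an unrestricted row may freely receive a $0$ or a $1$, subject to rule~\ref{P2} \emph{within the new column} — but since the new column is the last one, the only constraint is the column rule~\ref{P1} that it must contain at least one $1$; there is no ``one to the left'' issue for these cells because they are the leftmost entries of no concern, rather the rightmost. So the extensions obtained by a west step correspond bijectively to nonempty subsets $S$ of the $u(T)$ unrestricted rows, where $S$ is the set of rows getting a $1$ in the new column. (The south-step extension is the single case $S=\varnothing$; I would handle it separately, or absorb it, as the arithmetic below shows it corresponds to $k=u(T)+1$ — outside the stated range — so it is simply excluded.)

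Next I would determine $u(T')$ in terms of $S$. A row that was restricted in $T$ stays restricted (it still has its restricted zero). A row that was unrestricted in $T$: if it receives a $1$ in the new column, it stays unrestricted (adding a $1$ creates no restricted zero in that row, and none existed before). If it receives a $0$ in the new column, that $0$ is restricted precisely when there is a $1$ above it in the new column, i.e. precisely when some unrestricted row strictly above it lies in $S$. Hence, writing the unrestricted rows of $T$ from top to bottom, a row $r\notin S$ remains unrestricted in $T'$ if and only if no row of $S$ lies strictly above $r$; equivalently, $r$ is above every element of $S$. Combined with the fact that every row in $S$ stays unrestricted, we get: $u(T')$ equals $|S|$ plus the number of non-$S$ rows lying strictly above the topmost element of $S$. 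If the topmost element of $S$ is the $j$-th unrestricted row (counting from the top), then there are $j-1$ rows strictly above it, all of which are non-$S$ and hence remain unrestricted, so $u(T') = |S| + (j-1)$.

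Now the count is a routine summation. Fix the target value $k$ with $1\le k\le u(T)$. We need $|S|+(j-1)=k$ where $j$ is the index of the topmost element of $S$; so $j$ ranges over $1,\dots,k$, and given $j$ we need $|S|=k-j+1$ with the topmost element of $S$ at position $j$. The remaining $k-j$ elements of $S$ are chosen freely from the $u(T)-j$ unrestricted rows strictly below position $j$, giving $\binom{u(T)-j}{k-j}$ choices. Summing,
\[
\#\{T' : u(T')=k\} \;=\; \sum_{j=1}^{k}\binom{u(T)-j}{k-j},
\]
and the closed form $\binom{u(T)}{k-1}$ follows from the hockey-stick (Vandermonde/Pascal) identity: reindexing $i=j-1$ gives $\sum_{i=0}^{k-1}\binom{u(T)-1-i}{k-1-i}=\sum_{m=0}^{k-1}\binom{u(T)-1-(k-1)+m}{m}$, which telescopes via $\binom{a}{b}+\binom{a}{b+1}=\binom{a+1}{b+1}$ to $\binom{u(T)}{k-1}$.

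I expect the only real obstacle to be the careful bookkeeping in the middle step — pinning down precisely which unrestricted rows of $T$ remain unrestricted after the extension, and in particular the observation that the rows above the topmost $1$ of the new column are exactly the ones that ``survive'' among the non-$S$ rows. Once that structural fact is nailed down, both the case analysis ($S=\varnothing$ versus $S\neq\varnothing$, and why $S=\varnothing$ falls outside $1\le k\le u(T)$) and the binomial identity are standard.
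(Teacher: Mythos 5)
Your proof is correct, and it is essentially the standard argument: the paper itself gives no proof of Proposition~\ref{prop:ext}, deferring to \cite{ch,hj}, and the argument in those references is exactly your decomposition by the set $S$ of unrestricted rows receiving a $1$ in the new column, with the position of the topmost $1$ determining which of the remaining unrestricted rows survive, followed by the hockey-stick identity. The only blemish is the garbled aside about whether the new column is leftmost or rightmost (it is the new leftmost, full-height column, so its own cells have nothing to their left and the only constraints are that restricted rows are forced to $0$ and that the column must contain a $1$); this slip does not affect your count.
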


In the following sections we prefer to use  probabilistic language and thus, instead of talking about the number of corners in tableaux,  we let $\mbP_n$  be the uniform probability measure 
on $\mcX_n$ (where $\mcX_n$ is either $\mcT_n$ or $\mcP_n$)
 and consider a random variable $C_n$ on the probability space $(\mcX_n,\mbP_n)$ defined by $C_n(T)=c(T)$, the number of corners of $T\in\mcX_n$.
A tableau chosen from $\mcX_n$ according to the probability measure $\mbP_n$ is usually referred to as a random tableau of size $n$ and $C_n$ is referred to as the number of corners in a random tableau of size $n$. We let $\mbE_n$ denote the expected value with respect to the measure $\mbP_n$. 
Then, of course, we have: 
\[\mbE_nC_n=\frac{c(\mcX_n)}{|\mcX_n|}
.\]
As we will see below,  
the variance of the number of corners, $\var(C_n)$, grows to infinity as $n\to\infty$ (in fact, $\var(C_n)\sim11n/180$).  Furthermore if $\phi_n:\ \mcT_n\to \mcP_n$  is the bijection described in \cite{abn,hl} then for $T\in\mcT_n$, $c(T)=c(\phi_n(T))+I$, where $I$ is 0 or 1 depending on the shape of $T$. Therefore, for every $x\in\mbR$
\[\mbP_n\left(T\in\mcT_n:\ \frac{C_n(T)-\mbE C_n}{\sqrt{\var(C_n)}} \le x\right)=\mbP_n\left(T\in\mcP_n:\  \frac{C_n(T)-\mbE C_n+O(1)}{\sqrt{\var(C_n)}}\le x\right).\]
Thus, the limiting distribution of  the number of corners in a random tree--like tableau is the same as that of the number of corners in a random permutation tableau, so we will focus on the latter. 
Our main result is as follows.
\begin{theorem} \lbl{main_thm}
Let $\{C_n\}$ be a sequence of random variables where $C_n$ is the number of corners in a random permutation tableau of length $n$. Let:
\[\mu_n=\frac{n+4}6-\frac1{n}\sim\frac n 6\]
and
$$\sigma_n^2=\var(C_n)\sim
\frac {11}{180}n.$$
Then 
\[\frac{C_n-\mu_n}{\sigma_n}\xrightarrow[]{d}\mathcal{N}(0,1) \quad or\quad\frac{C_n-\frac n 6}{\sqrt{\frac {11}{180}n}}\xrightarrow[]{d}\mathcal{N}(0,1), \]
where $\xrightarrow[]{d}$ is convergence in distribution and $\mathcal{N}(0,1)$ is the standard normal random variable.
\end{theorem}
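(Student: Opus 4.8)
The plan is to establish asymptotic normality of $C_n$ through the method of moments, by controlling the moment generating function of $C_n$ via a recursion derived from the recursive construction of permutation tableaux. First I would set up a bivariate generating function (or a sequence of polynomials) that tracks both the number of corners $c(T)$ and the number of unrestricted rows $u(T)$ over $\mcP_n$; this second statistic is essential because, as Proposition~\ref{prop:ext} shows, $u(T)$ governs how a tableau extends, and the creation of a new corner when we pass from $T$ to an extension $T'$ depends on the shape of $T$ near its northeast corner, which is in turn controlled by whether the last step added was a south or a west step. Concretely, I would let $F_n(x,y)=\sum_{T\in\mcP_n} x^{c(T)} y^{u(T)}$ and, by analyzing the two ways (south step / west step with a prescribed number of resulting unrestricted rows) in which each $T\in\mcP_n$ extends to $T'\in\mcP_{n+1}$, derive a linear recursion expressing $F_{n+1}$ in terms of $F_n$ and its partial derivatives in $y$ — this is the content promised for Section~\ref{sec:gf}.

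Next I would convert this generating-function recursion into a recursion for the moment generating function $M_n(t)=\mbE_n e^{tC_n}$, or more precisely for a suitably weighted version that still carries the $u$-dependence, say $M_n(t,y)=\frac{1}{|\mcP_n|}\sum_{T}e^{tc(T)}y^{u(T)}$. The key point is that when a south step is added no corner is created but a corner may be ``completed'' at the previous step, whereas adding a west step with $k$ unrestricted rows in the extension interacts with the top of the tableau in a way that either creates a corner or not; summing over the $\binom{u(T)}{k-1}$ choices collapses the $y$-dependence into derivatives, giving a clean recursive identity. From there, differentiating at $t=0$ yields recursions for $\mbE_n C_n$ and $\mbE_n C_n^2$, which must reproduce $\mu_n=(n+4)/6-1/n$ (consistent with the known mean from \cite{hl,ggls}) and $\sigma_n^2\sim 11n/180$; then I would show that the centered, normalized cumulants of order $\ge 3$ vanish in the limit — equivalently, that $\log M_n\!\left(\tfrac{t}{\sigma_n}\right) - \tfrac{t\mu_n}{\sigma_n} \to \tfrac{t^2}{2}$ uniformly on compact $t$-sets — by extracting the asymptotics of the solution of the MGF recursion. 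The transfer back to tree--like tableaux is immediate from the discussion preceding the theorem, since $c(\phi_n(T))$ and $c(T)$ differ by $O(1)$, which is $o(\sigma_n)$.

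The main obstacle I anticipate is handling the $y$-derivatives (equivalently, the dependence on $u(T)$) in the MGF recursion: the recursion is not closed in $M_n(t)$ alone, so one must either track the joint transform $M_n(t,y)$ and evaluate/differentiate at a well-chosen point $y$, or argue that $u(T)$ concentrates sharply enough (it is known that $u(T)$ is asymptotically $\sim (\log n)/2$-ish in the appropriate sense, or rather behaves like a sum of indicators) that its fluctuations do not disturb the Gaussian limit for $C_n$. I expect the cleanest route is to diagonalize or solve the recursion exactly in the $(x,y)$ variables (the operator sending $F_n$ to $F_{n+1}$ should be triangular in a monomial basis in $y$), extract the leading-order behavior of the relevant coefficients, and only then specialize. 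A secondary technical point is verifying the exact constants $(n+4)/6-1/n$ and $11n/180$ from the recursion — routine but error-prone — and ensuring the error terms in $\log M_n$ are genuinely uniform so that Curtiss's theorem (convergence of MGFs on a neighborhood of $0$ implies convergence in distribution) applies.
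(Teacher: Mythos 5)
Your plan matches the paper's proof in all essentials: a bivariate generating function over $\mcP_n$ tracking corners and unrestricted rows, a recursion derived from the extension procedure of Proposition~\ref{prop:ext} (the paper's recursion shifts the catalytic variable $z\mapsto z+1$ rather than producing $y$-derivatives, but this is the same mechanism), conversion to a recursion for the centered moment generating function $P_n(t,y)$, and then the method of moments with uniform-in-$y$ control of the derivatives $P_n^{(m)}(y)$ to show the normalized moments converge to Gaussian moments. The exact constants $(n+4)/6-1/n$ and $11n/180$ and the $O(1)$ transfer back to tree--like tableaux are handled exactly as you describe.
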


\section{Generating Function and the First Two Moments}\lbl{sec:gf}
We wish to construct a generating function for the  number of corners  in  permutation tableaux of length $n$. We can do it recursively by using the extension procedure for permutation tableaux mentioned earlier. In order to do this we need to keep track of the number of unrestricted rows, and we use it as 
a 'catalytic' variable. Proposition~\ref{prop:ext} allows us to follow the evolution of the number of unrestricted rows under the extension and with its help we can derive a recurrence for the bivariate generating function. 
\begin{proposition}\label{prop:gf_rec}
Let  for $n\ge0$ 
$$C_n(x,z)=\sum_{T\in \mathcal{P}_n} x^{c(T)}z^{u(T)}$$ be the bivariate generation function of permutation tableaux of length $n$, where $x$ marks the number of corners and $z$ marks the number of unrestricted rows. Then we have the following recurrence for $C_n(x,z)$:
\be\lbl{gf_rec}C_{n}(x,z)=
zC_{n-1}(x,z+1)+(x-1)\Big(z(z+1)C_{n-2}(x,z+1)-z^2C_{n-2}(x,z)\Big)\ee
with 
\[C_0(x,z)=1,\quad C_1(x,z)=z.\]
\end{proposition}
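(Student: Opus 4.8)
The plan is to derive the recurrence \eqref{gf_rec} directly from the extension procedure described in Proposition~\ref{prop:ext}, tracking both how corners are created and how the number of unrestricted rows evolves. First I would fix $T\in\mathcal{P}_{n-1}$ with $u(T)=u$ and analyze its extensions into $\mathcal{P}_n$. There are two cases: adding a south step, which contributes a unique extension, and adding a west step, of which there are $2^u$ (by the Proposition immediately preceding Proposition~\ref{prop:ext}). The key observation is that a corner of an extension is either a corner already present in $T$ (inherited, its south-then-west pattern unaffected by the new step) or a brand-new corner created at the junction of the newly added step with the old border. So I would carefully classify, for each type of extension, whether exactly one new corner is created, and combine this with the count of Proposition~\ref{prop:ext} for the resulting number of unrestricted rows.

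The heart of the computation is the following bookkeeping. When a south step is added to $T$, no new column is formed, the number of unrestricted rows is unchanged (the new row is... actually there is no new row — a south step adds a row to the Ferrers shape; I would double-check the convention, but in the standard setup adding a south step adds an \emph{empty} row, leaving $u$ unchanged or incremented by one according to the definition used in \cite{ch,hj}), and a new corner is created precisely when the last border step of $T$ was a west step. When a west step is added, a new column appears which must be filled; the number of unrestricted rows of the extension is governed by Proposition~\ref{prop:ext}, so the generating-function contribution of west-step extensions with $k$ unrestricted rows carries a factor $\binom{u}{k-1}$, and summing $z^k\binom{u}{k-1}$ over $k$ produces $z(z+1)^u$. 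Among these $2^u$ west extensions, a new corner is created in all but one of them (the one where the new step continues a west run), which is exactly the combinatorial source of the $x-1$ correction: the "free" term $zC_{n-1}(x,z+1)$ counts every extension once with no new corner, and the $(x-1)(\cdots)$ term upgrades $x^{c}$ to $x^{c+1}$ for precisely those extensions that do create a corner. Matching the $z(z+1)^u$ versus $z^2(z+1)^{u-1}$-type weights against $C_{n-2}(x,z+1)$ and $C_{n-2}(x,z)$ — note the appearance of $n-2$, which signals that one must peel off \emph{two} extension steps to isolate the corner-creating configurations — yields the stated shifts $z\mapsto z+1$ and the coefficients $z(z+1)$ and $z^2$.

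The step I expect to be the main obstacle is the precise accounting of which extensions create a new corner and how that interacts with the $n-2$ (two-step lookback) structure: a corner is a south step \emph{immediately} followed by a west step, so whether a given single extension creates a corner depends on the previous border step of $T$, which is itself determined by how $T$ was built from a tableau in $\mathcal{P}_{n-2}$. Making this rigorous requires either a second-level case analysis (condition on the last two border steps, equivalently on $T$ being an extension of some $T''\in\mathcal{P}_{n-2}$ of a specified type) or an inclusion–exclusion argument that cleanly separates "number of corners not counting the last junction" from "is the last junction a corner." I would set up a sign-reversing / telescoping identity so that the contributions of non-corner-creating extensions cancel appropriately, leaving exactly the $(x-1)$ times a difference of two $C_{n-2}$ terms. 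Finally I would verify the initial conditions $C_0(x,z)=1$ (the empty tableau, no corners, no rows) and $C_1(x,z)=z$ (the unique tableau in $\mathcal{P}_1$ is a single cell forming one unrestricted row and no corner), and check the recurrence at small $n$ (e.g.\ recovering $C_2(x,z)$ and $C_3(x,z)$ by hand) as a consistency test.
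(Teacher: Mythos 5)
Your overall skeleton --- build $\mathcal{P}_n$ by single-step extensions, track corners and unrestricted rows, and resolve the corner question by a two-step lookback to $\mathcal{P}_{n-2}$ --- is exactly the paper's approach (the paper's decomposition into the classes $\mathcal{P}_{n,j}$ by the number of trailing west steps is precisely the ``second-level case analysis'' you defer to the end). But the bookkeeping you actually state is wrong at the two decisive points. First, a corner is a south step \emph{immediately followed} by a west step in the northeast-to-southwest traversal, and extensions append steps at the southwest end; hence appending a south step \emph{never} creates a corner (your rule ``a new corner is created precisely when the last border step of $T$ was a west step'' describes an inner corner, not a corner), while appending a west step creates a corner \emph{if and only if} the last step of $T$ is a south step --- independently of how the new column is filled. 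Your claim that ``among these $2^u$ west extensions, a new corner is created in all but one of them'' conflates the filling of the new column with the shape of the border: all west extensions of a fixed $T$ have the same border path, so either every one of them gains a corner or none does. This is not cosmetic; it is exactly the point where the factor $x$ must attach to the sum over those $T\in\mathcal{P}_{n-1}$ ending in a south step (equivalently, south-extensions of $S\in\mathcal{P}_{n-2}$, with $c(T)=c(S)$ and $u(T)=u(S)+1$) and to nothing else.

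Second, your extension weight is off. Proposition~\ref{prop:ext} gives $\binom{u}{k-1}$ west extensions with $k$ unrestricted rows only for $1\le k\le u$, so the weight of the west extensions of $T$ is
\[\sum_{k=1}^{u}\binom{u}{k-1}z^k=z\bigl((1+z)^{u}-z^{u}\bigr),\]
not $z(1+z)^u$ (there are $2^u-1$ west extensions; the remaining one of the $2^u$ is the south extension). The subtracted $z^{u+1}$ is precisely the source of the $-z^2C_{n-2}(x,z)$ term in \eqref{gf_rec}, so with $z(1+z)^u$ the recurrence cannot be recovered --- a check at $n=2$ already fails. With these two corrections the argument does close along your lines: summing $x^{c(T)+[T\text{ ends in a south step}]}\,z\bigl((1+z)^{u(T)}-z^{u(T)}\bigr)$ over $T\in\mathcal{P}_{n-1}$, adding $zC_{n-1}(x,z)$ for the south extensions (note $u$ \emph{does} increase by one there, which you leave undecided but which supplies that factor $z$), and rewriting the $(x-1)$-weighted sum over $T$ ending in a south step as $(1+z)C_{n-2}(x,1+z)-zC_{n-2}(x,z)$ gives \eqref{gf_rec}. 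As written, however, the proposal's stated corner rule and weights would yield a different, incorrect recurrence, and the step you yourself identify as the main obstacle is left unexecuted.
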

\begin{proof} The initial condition is clear as there are no permutation tableaux of length $0$ and there is one permutation tableau of length $1$; it has one unrestricted row and no corners.  
To establish \eqref{gf_rec}, we split the set $\mcP_n$ according to  the number of moves west since the last corner of the tableau $T$. Specifically, 
 let $\mathcal{P}_{n,j}$ be the set of permutation tableaux of length $n$ obtained from tableaux of length $n-j-1$ by  
adding a move south, followed by $j$ moves west.
Then,
\bqn \nonumber C_n(x,z)&=&
\lbl{cor_split}\sum_{j=0}^{n-1}\sum_{T\in \mathcal{P}_{n,j}} x^{c(T)}z^{u(T)}\\&=&\sum_{T\in \mathcal{P}_{n,0}} x^{c(T)}z^{u(T)}
+\sum_{j=1}^{n-1}\sum_{T\in \mathcal{P}_{n,j}} x^{c(T)}z^{u(T)}.
\eqn
Note that $\mcP_{n,0}$ consists of tableaux in $\mcP_{n-1}$ extended by a south move (this does not change the number of corners, but increases the number of unrestricted rows by 1). Thus,  
\be\lbl{1st}\sum_{T\in \mathcal{P}_{n,0}} x^{c(T)}z^{u(T)}=z\sum_{S\in \mathcal{P}_{n-1}} x^{c(S)}z^{u(S)}=zC_{n-1}(x,z).
\ee
Next, recall from Proposition~\ref{prop:ext}, that if we add a column to the tableau $T$,  the number of ways to fill it so that the new tableau 
has $U$ unrestricted rows is: 
\[\sum_{k=1}^U \binom{u(T) - k}{U-k}.\]
Moreover, a tableau in $\mcP_{n,1}$ is obtained by first extending a tableau in $\mcP_{n-2}$ by a south step (this increases the number of unrestricted rows by 1) and then adding a step west and filling the column. Thus, using  the  identity
\[\sum_{k=1}^n \binom{m-k}{n-k}=\binom{m}{n-1}\]
in the third step below and the binomial formula in the fifth we obtain
\bqn\nonumber\sum_{j=1}^{n-1}\sum_{T\in \mathcal{P}_{n,j}} x^{c(T)} z^{u(T)}&=&\sum_{T\in \mathcal{P}_{n,1}} x^{c(T)}z^{u(T)}+\sum_{j=2}^{n-1}\sum_{T\in \mathcal{P}_{n,j}} x^{c(T)}z^{u(T)}\\&=&
\nonumber x\sum_{S\in \mathcal{P}_{n-2}} x^{c(S)}\sum_{u=1}^{u(S)+1}z^{u}\sum_{k=1}^u\binom{u(S)+1-k}{u-k}\\&&\qquad+
\nonumber\sum_{j=2}^{n-1}\sum_{S\in \mathcal{P}_{n-1,j-1}} x^{c(S)}\sum_{u=1}^{u(S)}z^{u}\sum_{k=1}^u\binom{u(S)-k}{u-k}\\&
\nonumber=&x\sum_{S\in \mathcal{P}_{n-2}} x^{c(S)}\sum_{u=1}^{u(S)+1}z^{u}\binom{u(S)+1}{u-1}\\&&\qquad+
\nonumber\sum_{j=2}^{n-1}\sum_{S\in \mathcal{P}_{n-1,j-1}} x^{c(S)}\sum_{u=1}^{u(S)}z^{u}\binom{u(S)}{u-1}\\&=&
\nonumber xz\sum_{S\in \mathcal{P}_{n-2}} x^{c(S)}\sum_{u=0}^{u(S)}z^{u}\binom{u(S)+1}{u}\\&&\qquad+
\nonumber z\sum_{j=2}^{n-1}\sum_{S\in \mathcal{P}_{n-1,j-1}} x^{c(S)}\sum_{u=0}^{u(S)-1}z^{u}\binom{u(S)}{u}\\&=&
\nonumber xz\sum_{S\in \mathcal{P}_{n-2}} x^{c(S)}[(1+z)^{u(S)+1}-z^{u(S)+1}]\\&&\qquad+
\nonumber z\sum_{j=2}^{n-1}\sum_{S\in \mathcal{P}_{n-1,j-1}} x^{c(S)}[(1+z)^{u(S)}-z^{u(S)}]\\&=&
\lbl{cn-2}xz(z+1)C_{n-2}(x,z+1)-xz^2C_{n-2}(x,z)\\&&\qquad+
\lbl{sum_cn-1}z\sum_{j=1}^{n-2}\sum_{S\in \mathcal{P}_{n-1,j}} x^{c(S)}[(1+z)^{u(S)}-z^{u(S)}].
\eqn
The sum \eqref{sum_cn-1} is
\beq*
&&
z\sum_{j=0}^{n-2}\sum_{S\in \mathcal{P}_{n-1,j}} x^{c(S)}[(1+z)^{u(S)}-z^{u(S)}]-z\sum_{S\in \mathcal{P}_{n-1,0}} x^{c(S)}[(1+z)^{u(S)}-z^{u(S)}]\\&&\quad=
z\Big[C_{n-1}(x,z+1)-C_{n-1}(x,z)\Big]
-
z\Big[(z+1)C_{n-2}(x,z+1)-zC_{n-2}(x,z)\Big]
\eeq*
where in the last line we used \eqref{1st}. 
Substituting this in \eqref{sum_cn-1} and combining  with \eqref{cn-2}, \eqref{cor_split}, \eqref{1st} and simplifying we arrive at: 
\beq*C_{n}(x,z)&=&zC_{n-1}(x,z)+xz(z+1)C_{n-2}(x,z+1)-xz^2C_{n-2}(x,z)\\&&\qquad+zC_{n-1}(x,1+z)
-zC_{n-1}(x,z)\\&&\qquad-z(1+z)C_{n-2}(x,1+z)+z^2C_{n-2}(x,z)
\eeq*
which is equivalent to \eqref{gf_rec}.
\end{proof} 
\subsection{Expectation}
The above proposition allows us to recover the expected value of the number of corners, a result conjectured in \cite{lz}, first proved in \cite{hl}, and then also in \cite{ggls}. To do this,  note that it  is clear from  \eqref{gf_rec} that 
\[C_{n}(1,z)=
zC_{n-1}(1,z+1)=\cdots=z^{\ris n},\]
where
\[z^{\ris n}=z(z+1)\cdot\dots\cdot(z+n-1),\]
is the rising factorial. We can treat 
\[\frac{C_n(x,z)}{C_n(1,z)}  =\frac{C_n(x,z)}{z^{\ris{n}}}\]  
as the probability generating function of a random variable that depends on a parameter $z$ and, in fact, is defined on a probability space that depends on $z$. Ultimately, we will be interested in $z=1$ but it is convenient to proceed with more generality.  

When we write $C_n(x,z)$ in the form 
\[C_{n}(x,z)=\sum_{m=0}^{\lfloor{n/2}\rfloor}c_{n,m}(z)(x-1)^m,\]
then    the expected value of such random variable is $c_{n,1}(z)/z^{\ris n}$.
 Note that \eqref{gf_rec} yields
\[c_{n,m}(z)=zc_{n-1,m}(z+1)+z(z+1)c_{n-2,m-1}(z+1)-z^2c_{n-2,m-1}(z),\]
with the initial conditions $c_{n,0}=z^{\ris n}$, $n\ge0$.
 Iteration 
 gives
\begin{eqnarray}\nonumber c_{n,m}(z)&=&z(z+1)c_{n-2,m}(z+2)\\&&\nonumber+z(z+1)\Big((z+2)c_{n-3,m}(z+2)-(z+1)c_{n-2,m}(z+1)\Big)\\&=&
\nonumber z^{\ris k}c_{n-k,m}(z+k)\\&&\nonumber+\sum_{j=1}^kz^{\ris{j}}\Big((z+j)c_{n-j-1,m-1}(z+j)-(z+j-1)c_{n-j-1,m-1}(z+j-1)\Big)
\\&=&\lbl{cnm_expand}
z^{\ris{n-2m}}c_{2m,m}(z+n-2m)
\\&&\nonumber+\sum_{j=1}^{n-2m}z^{\ris{j}}\Big((z+j)c_{n-j-1,m-1}(z+j)-(z+j-1)c_{n-j-1,m-1}(z+j-1)\Big)
.\end{eqnarray}
When $m=1$ this becomes
\beq* c_{n,1}(z)&=&z^{\ris{n-2}}c_{2,1}(z+n-2)+\\&&\sum_{j=1}^{n-2}z^{\ris{j}}\Big((z+j)c_{n-j-1,0}(z+j)-(z+j-1)c_{n-j-1,0}(z+j-1)\Big)
\\&=& z^{\ris{n-2}}(z+n-2)+\\&&\sum_{j=1}^{n-2}z^{\ris{j}}\Big((z+j)(z+j)^{\ris{n-j-1}}-(z+j-1)(z+j-1)^{\ris{n-j-1}}\Big)\\&&\quad=
z^{\ris{n-1}}+
z^{\ris{n-2}}\sum_{j=1}^{n-2}\Big((z+j)(z+n-2)-(z+j-1)^2\Big)\\&&\quad=
z^{\ris{n-1}}+z^{\ris{n-2}}(n-2)(z+n-2)+\\&&\quad z^{\ris{n-2}}\sum_{j=1}^{n-2}\Big((z+j-1)(z+n-2)-(z+j-1)^2\Big)\\&&\quad=
z^{\ris{n-2}}\Big((n-1)(z+n-2)+\sum_{j=1}^{n-2}(z+j-1)(n-j-1)\Big)\\&&\quad=
z^{\ris{n-2}}(n-1)\Big((z+n-2)+\frac{(n-2)(n+3z-3)}6\Big)\\&&\quad=
z^{\ris{n-2}}(n-1)\frac{n^2+3zn+n-6}6.
\eeq*
Therefore,
\[\frac{c_{n,1}(z)}{z^{\ris n}}=\frac{(n-1)(n^2+3zn+n-6)}{6(z+n-1)_2}
\]
where $(w)_k=w(w-1)\dots (w-(k-1))$ is the falling factorial. 
When $z=1$ the above formula gives 
\[\mbE C_n=\frac{n^2+4n-6}{6n}=\frac{n+4}6-\frac1n\] which agrees with  \cite[Theorem~2]{hl}.

\subsection{Variance}
Calculation of the expected value  can be pushed further and we can obtain the variance of the number of corners, which has not been known before.
\begin{proposition}\label{prop:var} For $n\ge4$ we have
\[\var(C_n)=\frac{11n^4-191n^2+360n+180}{180n^2(n-1)}\sim\frac{11}{180}n\]
as $n\to\infty$. In addition
\[\var(C_1)=0,\quad\var(C_2)=\frac14,\quad\var(C_3)=\frac5{36}.\]
\end{proposition}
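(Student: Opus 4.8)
The plan is to read off $\var(C_n)$ from the generating function exactly as the expectation was read off, but keeping one more coefficient of the expansion of $C_n(x,z)$ in powers of $(x-1)$. For $z>0$ the series $C_n(x,z)/z^{\ris n}$ is a probability generating function --- of $C_n$ under the measure assigning mass $z^{u(T)}/z^{\ris n}$ to $T\in\mcP_n$, which for $z=1$ is just $\mbP_n$ --- so the quotient $c_{n,m}(z)/z^{\ris n}$ equals $1/m!$ times the $m$--th factorial moment. In particular, at $z=1$ one has $\mbE_nC_n=c_{n,1}(1)/n!$ and $\mbE_n[C_n(C_n-1)]=2c_{n,2}(1)/n!$, hence
\[\var(C_n)=\frac{2c_{n,2}(1)}{n!}+\frac{c_{n,1}(1)}{n!}-\Big(\frac{c_{n,1}(1)}{n!}\Big)^2=\frac{2c_{n,2}(1)}{n!}+\frac{n^2+4n-6}{6n}-\Big(\frac{n^2+4n-6}{6n}\Big)^2,\]
using the already established value $c_{n,1}(1)/n!=\mbE_nC_n=(n^2+4n-6)/(6n)$. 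So everything reduces to a closed form for $c_{n,2}(z)$.

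To get that I would set $m=2$ in \eqref{cnm_expand},
\[c_{n,2}(z)=z^{\ris{n-4}}c_{4,2}(z+n-4)+\sum_{j=1}^{n-4}z^{\ris j}\Big((z+j)\,c_{n-j-1,1}(z+j)-(z+j-1)\,c_{n-j-1,1}(z+j-1)\Big),\]
compute the base value $c_{4,2}(z)$ directly from \eqref{gf_rec}, and substitute the closed form $c_{N,1}(w)=w^{\ris{N-2}}(N-1)\big(N^2+3wN+N-6\big)/6$ (valid for every index $N=n-j-1\ge3$ that occurs in the sum). As in the $m=1$ computation the rising factorials telescope --- $z^{\ris j}(z+j)^{\ris{n-j-3}}=z^{\ris{n-3}}$ and $z^{\ris j}(z+j-1)^{\ris{n-j-3}}=z^{\ris{n-4}}(z+j-1)$ --- so after pulling out $z^{\ris{n-4}}$ the summand becomes an explicit polynomial in $j$, and the sum collapses via the elementary formulas for $\sum_{j=1}^{n-4}j^k$ with small $k$. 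This produces $c_{n,2}(z)=z^{\ris{n-4}}Q(n,z)$ for an explicit polynomial $Q$; at $z=1$ this gives $c_{n,2}(1)/n!=Q(n,1)/\big(n(n-1)(n-2)(n-3)\big)$.

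Substituting this into the displayed formula for $\var(C_n)$ and simplifying, the common denominator collapses to $180n^2(n-1)$ and the numerator to $11n^4-191n^2+360n+180$, which gives the asymptotics $\var(C_n)\sim\frac{11}{180}n$. The cases $n\le3$ have to be handled separately, since \eqref{cnm_expand} with $m=2$ needs $n\ge4$: from \eqref{gf_rec} one computes $C_1(x,1)=1$, $C_2(x,1)=1+x$ and $C_3(x,1)=1+5x$, whose normalizations $1$, $(1+x)/2$, $(1+5x)/6$ immediately yield $\var(C_1)=0$, $\var(C_2)=1/4$, $\var(C_3)=5/36$. I expect the only real obstacle to be the bookkeeping in the summation step: it is a long but routine computation, and one has to be careful about the base term $c_{4,2}$, about the range of indices for which the formula for $c_{n,1}$ is valid, and about the final factorization of the numerator --- all of which are easy to check against small $n$ or with a computer algebra system.
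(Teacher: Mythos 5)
Your proposal is correct and follows essentially the same route as the paper: setting $m=2$ in \eqref{cnm_expand}, computing $c_{4,2}(z)=z(2z+1)$ as the base term, substituting the closed form for $c_{N,1}$, telescoping the rising factorials, summing, and combining the second factorial moment at $z=1$ with the known expectation, with the cases $n\le 3$ done directly from \eqref{gf_rec}. The paper's proof is exactly this computation carried out explicitly.
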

\begin{proof} Consider $n\ge4$ (the other three cases can be calculated directly). Our first goal  is to extract $c_{n,2}(z)$.
From \eqref{cnm_expand} used with $m=2$  we have 
\beq*c_{n,2}(z)&=&z^{\ris{n-4}}c_{4,2}(z+n-4)\\&&\quad+\sum_{j=1}^{n-4}z^{\ris j}\Big((z+j)c_{n-j-1,1}(z+j)-(z+j-1)c_{n-j-1,1}(z+j-1)\Big).\eeq*
Since 
\[c_{4,2}(z)=z(z+1)c_{2,1}(z+1)-z^2c_{2,1}(z)=z(z+1)^2-z^3=z(2z+1),\]
we see that 
\be\lbl{c42} z^{\ris{n-4}}c_{4,2}(z+n-4)=z^{\ris{n-3}}(2(z+n)-7).\ee
Furthermore,
\beq*&&z^{\ris j}(z+j)c_{n-j-1,1}(z+j)\\&&\quad=z^{\ris{n-3}}(z+j)
(n-j-2)\frac{(n-j-1)^2+3(z+j)(n-j-1)+n-j-7}6
\eeq*
and similarly,
\beq*&&z^{\ris j}(z+j-1)c_{n-j-1,1}(z+j-1)\\&&\quad=z^{\ris{n-4}}(z+j-1)^2
(n-j-2)\frac{(n-j-1)^2+3(z+j-1)(n-j-1)+n-j-7}6.
\eeq*Therefore, 
\beq*&&\sum_{j=1}^{n-4}z^{\ris j}\Big((z+j)c_{n-j-1,1}(z+j)-(z+j-1)c_{n-j-1,1}(z+j-1)\Big)
\\&&=
\frac{z^{\ris{n-4}}}6\sum_{j=1}^{n-4}(n-j-2)\Big\{(z+j)(z+n-4)\Big((n-j-1)^2+3(z+j)(n-j-1)+n-j-7\Big)\\&&\qquad-(z+j-1)^2\Big((n-j-1)^2+3(z+j-1)(n-j-1)+n-j-7\Big)\Big\}
\\&&=\frac{z^{\ris{n-4}}}{360}(n-4)(5n^5+30n^4z+45n^3z^2-19n^4-105n^3z-150n^2z^2-56n^3-120n^2z
\\&&\qquad+255nz^2+301n^2+825nz+90z^2-981n-2070z+2430.
\eeq*
When $z=1$ this equals
\[\frac{(n-4)!}{360}(n-3)(n-4)(5n^4+26n^3-38n^2-83n-150).
\]
Combining with \eqref{c42} we get
\beq*
c_{n,2}(1)&=&(n-3)!(2n-5)+\frac{(n-4)!}{360}(n-3)(n-4)(5n^4+26n^3-38n^2-83n-150)\\&=&\frac{(n-2)!}{360}(5n^4+16n^3-110n^2-151n+600).
\eeq*
The second factorial moment for the number of corners is thus given by:
\[
\mbE(C_n)_2=\mbE C_n(C_n-1)=\frac{2!}{n!}c_{n,2}(1)=\frac{5n^4+16n^3-110n^2-151n+600}{180n(n-1)}\]
and therefore,
\beq*
\var(C_n)&=&\mbE(C_n)_2-(\mbE C_n)^2+\mbE C_n\\&=&\frac{5n^4+16n^3-110n^2-151n+600}{180n(n-1)}-\left(\frac{n+4}6-\frac1{n}\right)^2+\left(\frac{n+4}6-\frac1{n}\right)
\\&
=&\frac{11n^4-191n^2+360n+180}{180n^2(n-1)}
\eeq*
as claimed.
\end{proof}

It is, however, increasingly difficult to find $c_{n,m}$ for higher $m$. Instead, we will use \eqref{gf_rec} to derive a recurrence for the moment generating function and rely on method of moments (see e.~g.~\cite[Theorem~30.2]{bil}) to establish the asymptotic normality of suitably normalized $(C_n)$.

\section{Moment Generating Function}\lbl{sec:mgf}
To derive the moment generating function  for the number of corners, we substitute  $x=e^t$ in the expression for $C(x,z)$. We will be interested in positive values of $z$,
 and to emphasize this we let $z=y >0$. Consider
\be\lbl{mgf}P_n(t,y):=e^{-\mu_n(y)t}\frac{C_n(e^t,y)}{y^{\ris n}},\quad P_0(t,y)=P_1(t,y)=1\ee
where 
\[\mu_0(y)=0;\quad\mu_n(y)=\frac{(n-1)(n^2+3yn+n-6)}{6(y+n-1)_2},\quad n\ge1.\]
(Notice that $\mu_n(1)$ is the expected value of $C_n$, the number of corners in permutation tableaux of size $n$.)  Then,  recurrence \eqref{gf_rec} translates into 
\beq*P_n(t,y)&=&e^{\a_n(y)t}P_{n-1}(t,y+1)\\&+&\frac{e^t-1}{(y+n-1)_2}\left((y+1)(y+n-2)e^{\b_n(y)t}P_{n-2}(t,y+1)-y^2e^{\d_n(y)t}P_{n-2}(t,y)\right),\eeq*
where
\bqn\lbl{alpha}\a_{n}(y)&=&\mu_{n-1}(y+1)-\mu_n(y)=-\frac{n+yn-y-2}{(y+n-1)_2},\\
\b_n(y)&=&\mu_{n-2}(y+1)-\mu_n(y),\nonumber
\\\d_n(y)&=&\mu_{n-2}(y)-\mu_n(y).\nonumber
\eqn
The explicit expressions for $\b_n(y)$ and $\d_n(y)$ are not important, what matters however is that each of these expressions is of order one as $n,y\to\infty$ and that this holds uniformly over $n+y\ge y_0$.
 In particular, there exist universal constants $C_i$, and $y_i$, $i=1,2,3$, such that for all $n\in\mathbb N$, $y>0$ such that $n+y\ge y_i$ 
 \be\lbl{coeff_bdds}|\alpha_{n}(y)|\le C_1,\quad |\beta_{n}(y)|\le C_2, \quad |\delta_{n}(y)|\le C_3.\ee 
For example,
\[|\a_{n}(y)|\le\frac{n+ny}{(y+n-1)_2}\le\frac{(n+y)+(n+y)^2}{(y+n-1)_2}\le2,\]
whenever $n+y\ge7$, and similar statements hold for $\b_n(y)$ and $\d_n(y)$.

We now derive a linear recurrence of the first order for
\[P_n^{(m)}(0,y)=\frac{\partial^mP_n(t,y)}{\partial t^m}\Big|_{t=0}.\] 
First
\beq*
&&P_n^{(m)}(t,y)=e^{\a_n(y)t}P_{n-1}^{(m)}(t,y+1)+
\sum_{k=0}^{m-1}{m\choose k}\a_{n}^{m-k}(y)e^{\a_n(y)t}P_{n-1}^{(k)}(t,y+1)\\&&\,
+\frac{e^t}{(y+n-1)_2}\sum_{k=0}^{m-1}{m\choose k}\Big[(y+1)(y+n-2)\sum_{i=0}^k{k\choose i}\b_n^{k-i}(y)e^{\b_n(y)t}P_{n-2}^{(i)}(t,y+1)\\&&\hspace{4cm}
-y^2\sum_{i=0}^k{k\choose i}\d_n^{k-i}(y)e^{\d_n(y)t}P_{n-2}^{(i)}(t,y)\Big]
\\&&\,
+\frac{e^t-1}{(y+n-1)_2}\Big((y+1)(y+n-2)e^{\b_n(y)t}P_{n-2}^{(i)}(t,y+1)-y^2e^{\d_n(y)t}P_{n-2}^{(i)}(t,y)\Big)^{(m)}.
\eeq*

At $t=0$ the last term vanishes and letting $P_n^{(m)}(y):=P_n^{(m)}(0,y)$ we get 
\begin{eqnarray}\nonumber&&P_n^{(m)}(y)=P_{n-1}^{(m)}(y+1)+\sum_{k=0}^{m-1}{m\choose k}\a_n^{m-k}(y)P_{n-1}^{(k)}(y+1)
\\&&\lbl{P_rec}+\frac1{(y+n-1)_2}\sum_{k=0}^{m-1}
\left\{(y+1)(y+n-2)P_{n-2}^{(k)}(y+1)\sum_{i=k}^{m-1}{m\choose i}{i\choose k}\b_n^{i-k}(y)\right.\\&&\left.\hspace{3.3cm}-
y^2P_{n-2}^{(k)}(y)\sum_{i=k}^{m-1}{m\choose i}{i\choose k}\d_n^{i-k}(y)\right\}.\nonumber
\end{eqnarray}
This recurrence is the starting point for establishing asymptotic normality for the number of corners in permutation tableaux. We present the detailed argument in the forthcoming section.

\section{Proof of Theorem~\ref{main_thm}}\lbl{sec:clt}
Our proof  relies on the method of moments (see e.~g. \cite[Theorem~30.2]{bil} and on the analysis of  recurrence \eqref{P_rec} for the moments which will allow us to establish that:
\be\lbl{mom} \frac{P_n^{(m)}(1)}{{(\frac{11} {180} n)}^{\frac m2}}\rightarrow
\begin{cases} 
      0, & m \text{ odd} \\
      \frac{m!}{2^{\frac m2} \cdot (m/2)!}, & m \text{ even.} \\
   \end{cases}
\ee

First, we isolate the two highest degree terms in recurrence \eqref{P_rec} (the remaining terms are  of lower order and thus do not contribute significantly, as we will demonstrate shortly). Then \eqref{P_rec} expands into 
\beq*&&P_n^{(m)}(y)=P_{n-1}^{(m)}(y+1)+{m\choose m-1}\a_n(y)P_{n-1}^{(m-1)}(y+1)\\&&\hspace{2cm}+{m\choose m-2}\a_n^2(y)P_{n-1}^{(m-2)}(y+1)
\\&&\hspace{2cm}+\frac1{(y+n-1)_2}
\left\{(y+1)(y+n-2)P_{n-2}^{(m-1)}(y+1){m\choose m-1}\right.\\&&\hspace{4cm}-y^2P_{n-2}^{(m-1)}(y){m\choose m-1}\\&&\quad+(y+1)(y+n-2)P_{n-2}^{(m-2)}(y+1)\left[{m\choose m-2}+{m\choose m-1}{m-1\choose m-2}\b_n(y)\right]\\&&\left.\hspace{3cm}-
y^2P_{n-2}^{(m-2)}(y)\left[{m\choose m-2}+{m\choose m-1}{m-1\choose m-2}\d_n(y)\right]\right\}\\&&\quad+\sum_{k=0}^{m-3}{m\choose k}\a_{n}^{m-k}(y)P_{n-1}^{(k)}(y+1)\\&&\quad
+\frac{1}{(y+n-1)_2}\sum_{k=0}^{m-3}{m\choose k}\Big[(y+1)(y+n-2)\sum_{i=0}^k{k\choose i}\b_n^{k-i}(y)P_{n-2}^{(i)}(y+1)\\&&\hspace{4cm}
-y^2\sum_{i=0}^k{k\choose i}\d_n^{k-i}(y)P_{n-2}^{(i)}(y)\Big].
\eeq*
Consider first the terms involving the factor ${m\choose m-1}$.
Write 
\beq*P_{n-2}^{(m-1)}(y+1)&=&P_{n-1}^{(m-1)}(y+1)+\Delta_1^{(m-1)}(n,y)
,\\ 
P_{n-2}^{(m-1)}(y)&=&P_{n-1}^{(m-1)}(y+1)+\Delta_2^{(m-1)}(n,y)
\eeq*
where we have set 
\beq*\Delta_1^{(k)}(n,y)&:=&P_{n-2}^{(k)}(y+1)-P_{n-1}^{(k)}(y+1),\\ \Delta_2^{(k)}(n,y)&:=&P_{n-2}^{(k)}(y)-P_{n-1}^{(k)}(y+1)=P_{n-2}^{(k)}(y)-P_{n-2}^{(k)}(y+1)+\Delta_1^{(k)}(n,y)
.\eeq*
Notice that 
\[\a_n(y)+\frac{(y+1)(y+n-2)}{(y+n-1)_2}-\frac{y^2}{(y+n-1)_2}=0\]
and therefore,
\beq* &&{m\choose m-1}\left\{\a_n(y)P_{n-1}^{(m-1)}(y+1)+\frac{(y+1)(y+n-2)}{(y+n-1)_2}
P_{n-2}^{(m-1)}(y+1)\right.\\&&\hspace{2.5cm}\left.-\frac{y^2}{(y+n-1)_2}P_{n-2}^{(m-1)}(y)\right\}\\&&=
m\left(\frac{(y+1)(y+n-2)}{(y+n-1)_2}\Delta_1^{(m-1)}(n,y)-\frac{y^2}{(y+n-1)_2}\Delta_2^{(m-1)}(n,y)\right)\eeq*
Similarly, the expression involving factor ${m\choose m-2}$  is
\beq*&&\left(P_{n-1}^{(m-2)}(y+1)\alpha_n^2(y)+P_{n-2}^{(m-2)}(y+1)\frac{(y+1)(y+n-2)}{(y+n-1)_2}
(1+2\beta_n(y))\right.
\\&&\qquad\left.
-P_{n-2}^{(m-2)}(y)\frac{y^2}{(y+n-1)_2}(1+2\delta_n(y))\right)=
\\&&P_{n-1}^{(m-2)}(y+1)\left\{\a_n^2(y)+\frac{(y+1)(y+n-2)(1+2\b_n(y))-y^2(1+2\d_n(y))}{(y+n-1)_2}\right\}
\\&&\quad+
\left(\Delta_1^{(m-2)}(n,y)\frac{(y+1)(y+n-2)}{(y+n-1)_2}
(1+2\beta_n(y))\right.
\\&&\qquad\left.
-\Delta_{2}^{(m-2)}(n,y)\frac{y^2}{(y+n-1)_2}(1+2\delta_n(y))\right).
\eeq*
We denote  the term in the braces by $T_n(y)$ and write it as 
\be\lbl{t_def}T_n(y)=\a_n^2(y)-\a_n(y)+2\frac{(y+1)(y+n-2)\b_n(y)-y^2\d_n(y)}{(y+n-1)_2},\quad n\ge2.\ee
It  follows from \eqref{coeff_bdds} that
 \be\lbl{t_bdd}|T_{n}(y)|\le C\ee 
 for an absolute constant $C$ and  all $n\in\mathbb N$, $y>0$, such that $n+y\ge y_0$.

With this notation,   recurrence \eqref{P_rec} simplifies further to
\be
\lbl{P_n}P_n^{(m)}(y)=P_{n-1}^{(m)}(y+1)+{m\choose 2}T_n(y)P_{n-1}^{(m-2)}(y+1)+R_m(n,y)
\ee
where 
\beq*
R_{m}(n,y)&=&m\left(\Delta_1^{(m-1)}(n,y)\frac{(y+1)(y+n-2)}{(y+n-1)_2}-\Delta_2^{(m-1)}(n,y)\frac{y^2}{(y+n-1)_2}\right)
\\&&\quad+{m\choose 2}
\left(\Delta_1^{(m-2)}(n,y)\frac{(y+1)(y+n-2)}{(y+n-1)_2}
(1+2\beta_n(y))\right.
\\&&\hspace{1.5cm}\left.
-\Delta_{2}^{(m-2)}(n,y)\frac{y^2}{(y+n-1)_2}(1+2\delta_n(y))\right).
\\&&+\sum_{k=0}^{m-3}{m\choose k}\a_{n}^{m-k}(y)P_{n-1}^{(k)}(y+1)\\&&\,
+\frac{1}{(y+n-1)_2}\sum_{k=0}^{m-3}{m\choose k}\Big[(y+1)(y+n-2)\sum_{i=0}^k{k\choose i}\b_n^{k-i}(y)P_{n-2}^{(i)}(y+1)\\&&\hspace{4cm}
-y^2\sum_{i=0}^k{k\choose i}\d_n^{k-i}(y)P_{n-2}^{(i)}(y)\Big].
\eeq*
We note briefly, that $R_2(n,y)\equiv0$ because $\Delta_i^{(k)}(n,y)=0$ for $i=1,\, 2;\ k=0,\,1$ and the  sums over $k\le m-3$ are void. Therefore, since $P_0^{(2)}(y)=P_1^{(2)}(y)=0$ and $P_k^{(0)}(y)=1$, $k\ge0$,
\eqref{P_n} yields
\[P_n^{(2)}(y)=\sum_{j=0}^{n-2}T_{n-j}(y+j).\]
Letting $y=1$ and using  \eqref{t_def} and \eqref{alpha} we see that
\beq*T_{n-j}(j+1)&=&\frac{(nj+2n-j^2-3j-3)^2}{n^2(n-1)^2}+\frac{nj+2n-j^2-3j-3}{n(n-1)}\\&&\quad+2\frac{(j+2)(n-1)\b_{n-j}(j+1)-(j+1)^2\d_{n-j}(j+1)}{n(n-1)}
\eeq*
and after calculating and substituting the expressions for $\b_{n-j}(j+1)$ and $\d_{n-j}(j+1)$ we obtain 
\[\var(C_n)=P_n^{(2)}(1)=\sum_{j=0}^{n-2}T_{n-j}(j+1)=\frac{11n^4-191n^2+360n+180}{180n^2(n-1)},\]
which agrees with the earlier calculation.

We now show inductively that 
\be\lbl{bdds}P_n^{(m)}(y)=O((n+y)^{\floor{m/2}})\quad \mbox{and} \quad R_{m}(n,y)=O((n+y)^{\floor{(m-3)/2}})\ee and 
 that this is uniform over $n\in\mathbb N$ and $y>0$ such that $n+y\ge y_0$.

Notice that the first part of \eqref{bdds}   immediately implies \eqref{mom} for odd moments since if $m=2r+1$ is odd, then 
\beq*
\frac{P_n^{(2r+1)}(1)}{{(\frac{11} {180} n)}^{\frac {2r+1}2}}=\frac{O(n^{r})}{(\frac{11} {180} n)^{r+\frac12}}\longrightarrow 0
\eeq*
as $n\rightarrow \infty$.

To prove \eqref{bdds} we first observe that  $P_n^{(0)}(y)=1$, $P_n^{(1)}(y)=0$ and $P_n^{(2)}(y)=O(n+y)$. Assume now that $P_n^{(r)}(y)=O((n+y)^{\floor{r/2}})$ for $r=0,1,...,m$. In order to prove that $P_n^{(m+1)}(y)=O((n+y)^{\floor{\frac{m+1}2}})$, we first 
 show that the differences $\Delta_i^{(m)}(n,y)$,  $i=1,\, 2$,
lose an order of magnitude, that is 
if $P_n^{(m)}(y)=O((n+y)^{\floor{m/2}})$, then  $\Delta_i^{(m)}(n,y)=
O((n+y)^{\floor{\frac{m-2}2}})$, $i=1,\,2$. 
From \eqref{mgf}, we have that 
\beq*
P_n^{(m)}(y)=
\frac{\partial^m}{\partial t^m}\left(e^{-\mu_n(t)y}C_n(e^t,y)\right)\Big|_{t=0}
=\frac1{y^{\ris{n}}}\sum_{k=0}^m \binom mk (-\mu_n(y))^k C_n^{(m-k)}(1,y).
\eeq*

We note that $C_n^{(m-k)}(1,y)$ is a polynomial in $n$ and $y$, and so $P_n^{(m)}(y)$ is a rational function, say, 
 \[P_n^{(m)}(y)=\frac{p_m(n,y)}{q_m(n,y)}\] where $p_m(n,y)$ and $q_m(n,y)$ are polynomials.  
 The asymptotics of  
 $P_n^{(m)}(y)$ as $n+y\to\infty$ for $n\in\mathbb N$, $y>0$ are driven by $\deg(p_m)-\deg(q_m)$ where $\deg(p)$ stands for the total degree of a polynomial $p(n,y)$.

Consider 
\beq*
\Delta_2^{(m)}(n,y)&=&
\frac{p_m(n-2,y)q_m(n-1,y+1)-p_m(n-1,y+1)q_m(n-2,y)}{q_m(n-2,y)q_m(n-1,y+1)}.
\eeq*
By expanding the powers of $(n-1)^k=(n-2+1)^k$ and $(y+1)^l$ in $q_m(n-1,y+1)$ and $p_m(n-1,y+1)$ using the binomial formula,  
we see that all monomials of the highest total degree in the numerator cancel.  Thus, the polynomial in the numerator has degree at most 
$\deg(p_m)+\deg(q_m)-1$ and since the degree of the denominator is $2\deg(q_m)$, the growth rate of $\Delta_2^{(m)}(n,y)$ is at most 
$\deg(p_m)-\deg(q_m)-1$, one order of magnitude less than the growth rate of $P_n^{(m)}(y)$.
The argument for $\Delta_1^{(m)}(n,y)$ is the same.  

It thus follows from the inductive hypothesis that $\Delta_i^{(m)}(n,y)= O((n+y)^{\floor{\frac{m-2}2}})$, for $i=1,2$ and that $R_{m+1}(n,y)=O((n+y)^{\floor{\frac{m-2}2}})$ (for the latter fact, we use that 
\[\left|\frac{(y+1)(y+n-2)}{(y+n-1)_2}(1+2\beta_n(y))\right|\le C,\quad \left|\frac{y^2}{(y+n-1)_2}(1+2\d_n(y))\right|\le C\]
for a  universal constant $C$ all $n\in\mathbb N$ and $y>0$ such that $n+y\ge y_0$, which follows from \eqref{coeff_bdds}).

Then we see that
\beq*
 P_n^{(m+1)}(y)&=&\binom{m+1}2\sum_{j=1}^{n-1}\Big[T_{n-j+1}(y+j-1)P_{n-j}^{(m-1)}(y+j)+O(((n+y)^{\floor{\frac{m-2}2}})\Big]
\\&=&\binom{m+1}2\sum_{j=1}^{n-1}\Big[O(1)O((n+y)^{\floor{\frac{m-1}2}})+O(((n+y)^{\floor{\frac{m-2}2}})\Big]
\\&=&\binom {m+1}2 \Big[O((n+y)^{\floor{\frac{m-1}2}+1})+O(((n+y)^{\floor{\frac{m-2}2}+1})\Big]
\\&=&O((n+y)^{\floor{\frac{m+1}2}})
\eeq*
which concludes the induction. 

We can now complete the proof of \eqref{mom} when $m=2r$ is even. For $n$ sufficiently large, repeated application of \eqref{P_n} yields 
\beq*P_n^{(2r)}(y)
&=&{2r\choose2}\sum_{j=1}^{n-2}[T_{n-j+1}(y+j-1)P_{n-j}^{(2r-2)}(y+j)+O((n+y)^{r-2})]\\
&=&{2r\choose2}{{2r-2}\choose2} \sum_{j_1=1}^{n-2}\sum_{j_2=1}^{n-j_1-2}\Biggr\{T_{n-j_1+1}(y+j_1-1)T_{n-j_1-j_2+1}(y+j_1+j_2-1)\\
&&\hspace{1cm}\times\Big(P_{n-j_1-j_2}^{(2r-4)}(y+j_1+j_2)
+O((n+y)^{r-3})\Big)\Biggr\}+\sum_{j=1}^{n-2}O((n+y)^{r-2})\\
&=&{2r\choose2}{{2r-2}\choose2} \sum_{j_1=1}^{n-2}\sum_{j_2=1}^{n-j_1-2}\Biggr\{T_{n-j_1+1}(y+j_1-1)T_{n-j_1-j_2+1}(y+j_1+j_2-1)\\&&\hspace{4cm}\times P_{n-j_1-j_2}^{(2r-4)}(y+j_1+j_2)\Biggr\}
\\&&\hspace{1.5cm}+O\left(\left(\sum_{j=1}^{n-2}T_{n-j}(y+j)\right)^2\cdot (n+y)^{r-3}\right)+O((n+y)^{r-1})\\
&=&{2r\choose2}{{2r-2}\choose2} \sum_{j_1=1}^{n-2}\sum_{j_2=1}^{n-j_1-2}\Biggr\{T_{n-j_1+1}(y+j_1-1)T_{n-j_1-j_2+1}(y+j_1+j_2-1)\\&&\hspace{4cm}\times P_{n-j_1-j_2}^{(2r-4)}(y+j_1+j_2)\Biggr\}
\\&&\hspace{1.5cm} +O(n^2(n+y)^{r-3})+O((n+y)^{r-1})=\dots=\\
&=&{2r\choose2}
\dots{4\choose2}\sum_{j_1=1}^{n-2}\sum_{j_2=1}^{n-j_1-2}\dots\sum_{j_r=1}^{n-1-\sum_{i=1}^{r-1}j_i}\prod_{i=1}^r T_{n+1- \sum_{l=1}^{i}j_l}(y+\sum_{l=1}^{i}j_l-1)
\\&&\hspace{1.5cm}
+O((n+y)^{r-1})\\
&=&\frac{(2r)!}{2^r}\sum_{0\le k_1<k_2<\dots<k_r<n-1}\prod_{i=1}^r T_{n-k_i}(y+k_i)+O((n+y)^{r-1})\\
&=&\frac{(2r)!}{2^r}\frac1{r!}\sum_{0\le k_1,\dots, k_r<n-1\atop {\rm distinct}}\prod_{i=1}^r T_{n-k_i}(y+k_i)+O((n+y)^{r-1})\\
&=&\frac{(2r)!}{2^r}\frac1{r!}\left(\sum_{0\le k_1,\dots,k_r<n-1\atop {\rm all}} \prod_{i=1}^r T_{n-k_i}(y+k_i)-\sum_{0\le k_1,\dots,k_r<n-1\atop {\rm not\  all\   distinct}} \prod_{i=1}^r T_{n-k_i}(y+k_i)\right)
\\&\quad +&O((n+y)^{r-1}).
\eeq*
Set $y=1$. 
Then the first sum becomes
\[\sum_{0\le k_1,\dots,k_r<n-1\atop {\rm all}} \prod_{i=1}^r T_{n-k_i}(1+k_i)=
\left(\sum_{k=0}^{n-2} T_{n-k}(k+1)\right)^r=\left(P_n^{(2)}(1)\right)^r\sim\left(\frac{11}{180}n\right)^r\]
where the second equality follows from \eqref{P_n} used with $m=2$ and  $P_n^{(0)}(y)=1$. 

For the second summation, recall that by \eqref{t_bdd}, $|T_{n-k}(y+k)|\le C$. Thus:
 \[\left|\sum_{0\le k_1,\dots,k_r<n-1\atop{\rm not\ all\ distinct}} \prod_{i=1}^r T_{n-k_i}(y+k_i)\right|\le \sum_{0\le k_1,\dots,k_r<n-1\atop {\rm not\ all\ distinct}} \prod_{i=1}^r 
 \left|T_{n-k_i}(y+k_i)\right|\le 
C^r\cdot O(n^{r-1}),\]
which is of lower order than the first sum. This proves \eqref{mom} for $m$ even and completes the proof of Theorem~\ref{main_thm}.

\bibliographystyle{plain}

\end{document}